\documentclass[11pt]{amsart}

\usepackage[colorlinks=true,linkcolor=blue,citecolor=blue]{hyperref}
\usepackage{amsfonts, amssymb, amsmath}
\usepackage{mdframed, graphicx, color}

\usepackage{bbm, a4wide}

\usepackage{caption,subfig}

\DeclareMathAlphabet{\mymathbb}{U}{bbold}{m}{n}

\newcommand{\RR}{\mathbb{R}}
\newcommand{\ZZ}{\ts\mathbb{Z}}
\newcommand{\XX}{\mathbb{X}}

\newcommand{\QQ}{\mathbb{Q}}

\newcommand{\cT}{\mathcal{T}}

\newcommand{\ts}{\hspace{0.5pt}}

\newtheorem{theorem}{Theorem}

\newtheorem{coro}[theorem]{Corollary}
\newtheorem{lemma}[theorem]{Lemma}
\theoremstyle{definition}

\newtheorem{example}[theorem]{Example}
\newtheorem{remark}[theorem]{Remark}

\newcommand{\exend}{\hfill$\Diamond$}

\title[Dynamical Invariants from Asymptotic Composants]
  {Dynamical Invariants from Asymptotic Composants}

\author{Franz G\"{a}hler}
\address{Fakult\"at f\"ur Mathematik, Universit\"at Bielefeld, \newline
  \indent  Postfach 100131, 33501 Bielefeld, Germany}
\email{gaehler@math.uni-bielefeld.de}

\begin{document}

\begin{abstract}

  Asymptotic composants and their incidence relations are powerful
  invariants of 1-dimensional inflation tilings spaces, which can
  distinguish many MLD classes of tilings. In particular, and unlike most
  other invariants, they can often provide obstructions to a tiling space
  being MLD to its reflection. We present a simple algorithm to
  determine these asymptotic composants for primitive inflation tiling
  spaces in one dimension, and illustrate how they can be used to tell
  different MLD classes of tilings apart. In an Appendix, we then show that
  the structure of asymptotic composants, together with the orbit separation
  dimension (OSD), can distinguish \emph{all} MLD classes of inflations
  tilings with pure-point spectrum for a bunch of small inflation factors,
  which illustrates the power of these invariants.
  
\end{abstract}

\keywords{Asymptotic composants, Inflation tilings, Mutual local derivability}
\subjclass{37B52, 52C23} 

\maketitle

\section{Introduction}\label{sec:intro}

Let $\XX$ be a space of aperiodic tilings of the real line $\RR$,
generated by a primitive inflation rule $\varrho$. It is well known
that the translation action on such a space is not equicontinuous.
In fact, it has been shown \cite[Corollary~3.9]{BD} that there exist
pairs of tilings $(\cT_1,\cT_2)$ which are \emph{right asymptotic},
in the sense that $d(\cT_1 - t, \cT_2 - t) \rightarrow 0$ as
$t \rightarrow \infty$, where $d$ is the usual tiling metric.
Right asymptotic tilings completely agree on an entire right half-line.
Up to translations, there exists at least one such pair, and at most
finitely many. Analogously, there also exist \emph{left asymptotic}
pairs of tilings, which agree on an entire left half-line. We call
any tiling \emph{asymptotic}, if it is left or right asymptotic to
some other tiling. It has also been shown \cite[Corollary~3.9]{BD}
that each translation orbit of an asymptotic pair contains one pair
that is fixed by some power of the inflation $\varrho$. The (finite)
set of asymptotic inflation fixed points, together with the two
equivalence relations of being left resp.\ right asymptotic is an
interesting dynamical invariant, as it is one of the rare invariants
which can possibly distinguish a space from its reflected space
(with inverted dynamics).

In the context of a 1D tiling space $\XX$, the notion
of a composant (arc component) is identical to that of a path
component \cite{BD}, which in turn can be identified with a
translation orbit. Accordingly, we call the composants of asymtotic
tilings asymptotic composants. A pair of left asymptotic composants
approach each other towards left infinity, and analogously, a pair of
right asymptotic composants approach each other at right infinity.
There is an obvious bijection between asymptotic composants and the
asymptotic inflation fixed points representing them. We adopt here the
term asymptotic composants.

We concentrate here on the 1D case, although Barge and Olimb \cite{BO}
have introduced the \emph{branch locus} as a corresponding concept for
higher dimensions. While their approach brings geometry into the
discussion, and hence opens the door towards studying the interplay
between geometric symmetries and MLD relations, the branch locus is
also a much more complicated object, and its systematic use would go
far beyond the scope of the present paper.

The remainder of this note is organised as follows. In
Section~\ref{sec:invar}, we discuss invariants derived from the set of
asymptotic composants and their equivalence relations of being left
resp. right asymptotic. These invariants must be preserved under
mutual local derivability (MLD) relations, see \cite{TAO} for background,
which is a special kind of topological conjugacy of the corresponding
translation dynamical systems. These invariants thus impose constraints
on the existence of MLD equivalence relations between two tiling spaces.

In Section~\ref{sec:deriv}, we then describe how the asymptotic
composants and their equivalence relations can be determined in
a fully algorithmic way. This is a substantial improvement over
\cite{BD}, where a rather involved procedure is given,
which is not easily automatised. In Section~\ref{sec:examp},
we then illustrate the application and the power of these
invariants with a number of examples. Further examples are
discussed in an Appendix, where we show that the MLD classes of
all primitive, irreducible inflations with an inflation factor
that is one of the eight smallest ternary Pisot units can be
distinguished by comparing their asymptotic composants and their
orbit separation dimension (OSD), as developed in \cite{BGG}.

\section{Invariants from asymptotic composants}\label{sec:invar}

Suppose we have two spaces $\XX$ and $\tilde{\XX}$ of 1D aperiodic
tilings, generated by primitive inflation rules $\varrho$ and
$\tilde{\varrho}$ sharing a common inflation factor $\lambda$. Our
question now is: can these two spaces possibly be MLD to each other?
We approach the answer to this question by analysing and comparing the
structure of the asymptotic composants of the two spaces. Recall that
there is a bijection between the set of asymptotic composants of $\XX$
and the subset $S\subset\XX$ consisting of all asymptotic tilings
which are fixed by some power of the inflation.  We call these tilings
briefly \emph{asymptotic inflation fixed points}.  The set $S$ is
finite and non-empty \cite[Corollary~3.9]{BD}. Also, $\tilde{\XX}$
has a corresponding set $\tilde{S}$ of asymptotic inflation fixed points.
Any MLD map $\XX\rightarrow\tilde{\XX}$ must induce an isomorphism between
$S$ and $\tilde{S}$, and it must also preserve various equivalence relations
on $S$ and $\tilde{S}$. In order to work this out, the following lemmas
will be useful.

\begin{lemma}\label{lemma:equiv}
  The images under an MLD map of two tilings which are left (right)
  asymptotic are again left (right) asymptotic. Likewise, the
  inflations of two tilings which are left (right) asymptotic are
  again left (right) asymptotic.
\end{lemma}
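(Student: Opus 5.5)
The plan is to reduce both assertions to the elementary characterisation of asymptoticity recorded in the introduction: for tilings of $\RR$ in a space with finite local complexity, $\cT_1$ and $\cT_2$ are right asymptotic if and only if there is some $r\in\RR$ such that $\cT_1$ and $\cT_2$ coincide on the half-line $[r,\infty)$. One direction is immediate from the definition of the tiling metric: if they agree on $[r,\infty)$, then $\cT_1-t$ and $\cT_2-t$ agree on $[r-t,\infty)$, and hence on a ball of radius $t-r$ around the origin, so $d(\cT_1-t,\cT_2-t)\to 0$. For the converse, small distance means agreement on a large ball up to a small translation. By finite local complexity together with aperiodicity (tile lengths and their relative positions are locally rigid), the small translation must vanish once $t$ is large enough. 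I will state this characterisation explicitly at the start and then work only with half-line agreement. The left asymptotic case is the mirror image, so I will treat only the right case.

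For the MLD part, let $\phi\colon\XX\to\tilde{\XX}$ be an MLD map. Local derivability gives a radius $R>0$ such that the patch of $\phi(\cT)$ in any interval $[a,b]$ is determined, in a translation-equivariant way, by the patch of $\cT$ in $[a-R,b+R]$. If $\cT_1$ and $\cT_2$ agree on $[r,\infty)$, then for every $b>r+R$ their images agree on $[r+R,b]$, and hence on the whole half-line $[r+R,\infty)$. By the characterisation above, $\phi(\cT_1)$ and $\phi(\cT_2)$ are right asymptotic. Only local derivability in one direction is used, so the statement in fact holds for any local derivation (factor map given by a local rule), and in particular for an MLD map.

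For the inflation part, recall that $\varrho$ acts on a tiling by rescaling it by $\lambda>1$ about the origin and then subdividing each tile according to the substitution. If $\cT_1$ and $\cT_2$ agree on $[r,\infty)$, then their rescalings agree on $[\lambda r,\infty)$. Because the subdivision of a tile depends only on that tile, the inflated tilings agree on $[\lambda r,\infty)$ as well. Alternatively, one may note that $\varrho$ is itself a local derivation composed with a rescaling, and a rescaling by $\lambda>1$ preserves half-line agreement.

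I expect the only real obstacle to be the converse direction of the characterisation, namely that metric asymptoticity forces exact agreement on a half-line rather than agreement up to vanishing shifts. If a self-contained argument becomes awkward, I would instead take the exact-agreement statement from \cite{BD} as given, since the introduction already asserts that right asymptotic tilings agree on an entire right half-line.
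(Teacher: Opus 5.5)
Your proof is correct and follows the same route as the paper. The paper also reduces asymptoticity to agreement on a half-line and notes that a local rule (which moves the half-line by the radius $R$) and the inflation (which sends $[r,\infty)$ to $[\lambda r,\infty)$) preserve such agreement; you simply supply the details it leaves implicit, and for the converse of the metric characterisation a positive lower bound on tile lengths already suffices, so you do not need aperiodicity.
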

\begin{proof}
  Two tilings are left (right) asymptotic if and only if they agree
  on a complete left (right) half-line. This property is naturally
  preserved under MLD maps and inflations.
\end{proof}

Recall that an inflation map with inflation factor $\lambda$ on a
tiling space $\XX$  is a self-homeo\-morphism $\varrho'$ of $\XX$
such that a scaled tiling $\lambda\cT$ is MLD to $\varrho'(\cT)$.
As a consequence, all inflation maps on $\XX$ with the same inflation
factor are MLD equivalent. The question now is, whether there are
non-trivial MLD self-homeomorphisms which preserve the scaling
center of an inflation map.

\begin{lemma}\label{lemma:infl}
  Let $\XX$ be a space of 1D aperiodic tilings, generated by a primitive
  inflation rule $\varrho$ with scaling factor $\lambda$ and scaling centre
  $c$. If $\XX$ has pure-point dynamical spectrum, there can be no second
  inflation map on $\XX$ with the same scaling factor and the same scaling centre.
\end{lemma}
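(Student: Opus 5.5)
The plan is to compare the two inflation maps through the maximal equicontinuous factor (MEF) of $(\XX,\RR)$, and to use pure-point spectrum to lift equality on the factor back to $\XX$. Suppose $\varrho'$ is a second inflation map on $\XX$ with the same factor $\lambda$ and the same scaling centre, which we take to be $c=0$. Then both maps satisfy $\varrho(\cT-t)=\varrho(\cT)-\lambda t$ and $\varrho'(\cT-t)=\varrho'(\cT)-\lambda t$. Hence $\varphi:=\varrho'\circ\varrho^{-1}$ is a self-homeomorphism of $\XX$ with $\varphi(\cT-s)=\varphi(\cT)-s$ for all $s\in\RR$. Since $\lambda\cT$ is MLD to both $\varrho(\cT)$ and $\varrho'(\cT)$, the map $\varphi$ is moreover an MLD self-map. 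It therefore suffices to show that every such $\varphi$ compatible with the common scaling centre is the identity.

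\textbf{Step 1: $\varphi$ acts on the MEF as a rotation.} Let $\pi\colon\XX\to\mathbb{A}$ be the factor map onto the MEF, a compact abelian group (a solenoid or torus) with $\RR$ acting by a dense one-parameter subgroup. If $f$ is a continuous eigenfunction with eigenvalue $k$, then $f\circ\varphi$ is an eigenfunction with the same eigenvalue $k$. Unique ergodicity makes all eigenvalues simple, so $f\circ\varphi=c_k f$ with $|c_k|=1$. Since the eigenfunctions generate the MEF, $\pi\circ\varphi=\pi+g$ for a single element $g\in\mathbb{A}$. In the same way, $f\circ\varrho$ is an eigenfunction for the eigenvalue $\lambda k$. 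Consequently $\varrho$ and $\varrho'$ induce affine maps $x\mapsto Ax+a$ and $x\mapsto Ax+a'$ on $\mathbb{A}$, with the same linear part $A$; the linear part is multiplication by $\lambda$ on the dual (eigenvalue) group, and this depends only on $\lambda$. It follows that $g=a'-a$.

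\textbf{Step 2: the common centre forces $g=0$.} This is the step I expect to be the main obstacle, because it requires making ``same scaling centre'' precise on the factor. The approach is to take a tiling $\cT_0$ fixed by a power $\varrho^n$; such tilings are the tilings centred at the scaling centre. Its image $\pi(\cT_0)$ is a fixed point of the affine map $(x\mapsto Ax+a)^n$. Since $A$ is hyperbolic on $\mathbb{A}$, with no eigenvalue of modulus $1$ in the Pisot setting, this fixed point is unique up to the finite set of $n$-periodic points. Same centre means that $\varrho'$ admits the same centred fixed points in the MEF, that is, the two affine maps share their (unique) fixed point. The two affine maps have the same linear part $A$, so sharing a fixed point forces $a=a'$, hence $g=0$. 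If this identification proves delicate, I would fall back on the following argument. For each element of the finite set of $\varrho$-periodic asymptotic fixed points in $S$, together with its $\varrho'$ counterpart, $\varphi$ preserves the location of the origin. This rules out a nonzero rotation, since a nonzero $g$ would move the whole fibre over $\pi(\cT_0)$.

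\textbf{Step 3: lift to $\XX$.} With $g=0$ we have $\pi\circ\varphi=\pi$, so $\varphi$ maps every fibre of $\pi$ into itself. Pure-point spectrum for a primitive inflation tiling means that $\XX$ is a regular model set space. The map $\pi$ is therefore one-to-one on a set of full Haar measure, in particular on a dense set of tilings. On each such non-singular fibre, $\varphi$ must act as the identity. By continuity, $\varphi=\mathrm{id}$ on all of $\XX$, so $\varrho'=\varrho$. This proves that there is no second inflation map with the same scaling factor and the same scaling centre.
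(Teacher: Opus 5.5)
Your architecture is the paper's. You set $\varphi=\varrho'\circ\varrho^{-1}$, a translation-commuting MLD self-map, show it preserves every fibre over the MEF, and conclude $\varphi=\mathrm{id}$ because almost all fibres are singletons. Step 1 is fine. Your Step 3 (identity on the dense, translation-invariant set of singleton fibres, then continuity) is a cleaner ending than the paper's appeal to regions of unbounded agreement, and it does not even need $\varphi$ to be MLD. The paper does not derive fibre preservation at all. It takes it as immediate that a second inflation with the same factor and the same centre differs from $\varrho$ by an MLD map preserving all MEF fibres, so $g=0$ is built into its reading of the hypothesis.

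Your Step 2, which tries to obtain $g=0$ from a common centred fixed point, has a genuine gap.
\begin{itemize}
\item A tiling $\cT_0$ fixed by $\varrho^n$ and $\varrho'^n$ only gives a common fixed point of the $n$-th powers of the induced affine maps. Their translation parts are $(I+A+\cdots+A^{n-1})a$ and $(I+A+\cdots+A^{n-1})a'$, so equality yields $(I+A+\cdots+A^{n-1})\,g=0$, not $g=0$.
\item On a torus this endomorphism has finite but in general nontrivial kernel, of order $|\det(I+A+\cdots+A^{n-1})|$; already $|\det(I+A)|=2$ for the Tribonacci matrix. So for $n>1$ you only learn that $g$ is torsion.
\item The accompanying uniqueness claim is also false: $x\mapsto Bx+b$ on $\mathbb{T}^d$ with $B$ hyperbolic has $|\det(B-I)|$ fixed points, for instance two when $B$ is the Tribonacci matrix. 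You do not actually need uniqueness, though.
\item The fallback argument is circular. Saying that $\varphi$ ``preserves the location of the origin'' for $\cT_0$ only contradicts $g\neq 0$ if you already know that $\varphi(\cT_0)$ lies in the fibre over $\pi(\cT_0)$, which is $g=0$.
\end{itemize}

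The repair is to encode the common centre at the level of the maps themselves, not their powers. Either adopt the paper's reading (equal induced maps on the MEF), or ask for a point $x_0$ of the MEF with $Ax_0+a=x_0=Ax_0+a'$. The latter forces $a=a'$, hence $g=0$, in one line, and your Step 3 then finishes the proof.
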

\begin{proof}
  For a second inflation rule $\varrho'$ with the same scaling factor and the
  same scaling centre, we would need a non-trivial MLD map which preserves
  all fibres over the maximal equicontinuous factor (MEF) of the underlying
  dynamical system (for a discussion of the MEF in our context, compare
  \cite{BK13}). In case the dynamical system hast pure-point spectrum,
  almost all fibres have cardinality $1$, and in the remaining fibres,
  all tilings from the same fibre differ only on a set of zero density.
  In particular, there are regions of unbounded size, in which two tilings
  from the same fibre agree completely. This implies that there can be no
  non-trivial MLD map between such tilings.
\end{proof}

\begin{remark}\label{rem:MLDsym}
  The pure-point condition above is essential. For instance, the Thue--Morse
  tiling space (compare \cite[Example~7]{BRY}) can be generated by two different
  inflation maps. In this case, almost all fibres over the MEF have cardinality 2,
  and the two tilings in a typical fibre are related to each other by a swap of
  the two tile types, which is an MLD map. Consequently, if we combine an
  inflation with a swap of the two tile types, we obtain a second possible
  inflation map, so that the inflation map is no longer unique. In the
  pure-point case, two different inflation maps can only be related by a
  translation, which is an MLD map, but it does not preserve the scaling centre.
  \exend
\end{remark}

In the following, we concentrate on inflation tilings with pure-point
spectrum, which have a unique inflation for a given scaling factor and
scaling centre. In the general case, our conclusion will be weaker,
compare Remark~\ref{rem:nonpp}.

Suppose now that we have two spaces $\XX$ and $\tilde{\XX}$ of 1D
aperiodic tilings with pure-point spectrum, generated by primitive
inflations rules $\varrho$ and $\tilde{\varrho}$ with the same
inflation factor $\lambda$. Assume there is an MLD map
$f:\XX \rightarrow \tilde{\XX}$, which in view of Lemma~\ref{lemma:equiv}
must map the scaling centre of $\varrho$ to that of $\tilde{\varrho}$, 
and by Lemma~\ref{lemma:infl} must satisfy
$\tilde{\varrho} = f \circ \varrho \circ f^{-1}$.

Denote by $S$ the set of asymptotic inflation fixed points of $\XX$,
and by $\tilde{S}$ the corresponding set for $\tilde{\XX}$. By
Lemma~\ref{lemma:equiv}, $f$ must induce an isomorphism beween $S$ and
$\tilde{S}$. Moreover, $S$ carries two partitions, $P_L$ and $P_R$.
Two tilings from $S$ are in the same subset of $P_L$ ($P_R$) if and
only if they are left (right) asymptotic to each other. $\tilde{S}$
carries two analogous partitions, $\tilde{P}_L$ and $\tilde{P}_R$,
and $f$ must also induce an isomorphism between the partitions $P_L$
and $\tilde{P}_L$, and between $P_R$ and $\tilde{P}_R$. Further, by
Lemma~\ref{lemma:equiv} the inflation $\varrho$ acts via a
permutation $p$ on the set $S$, and $\tilde{\varrho}$ acts via
a permutation $\tilde{p}$ on $\tilde{S}$. Due to our choice of $f$,
the map $f$ preserves the orbit struction of these permutations, and
induces a bijection between the orbits of $p$ and the orbits of
$\tilde{p}$. Hence, we can partition the set $S$ into subsets $S_j$,
each containing the tilings in a single orbit of the permutation $p$.
Likewise, there is a corresponding partition of $\tilde{S}$ into
subsets $\tilde{S}_j$, containing the orbits of $\tilde{p}$, and we
can order these subsets such that $f$ induces an isomorphism between
$S_j$ and $\tilde{S}_j$ for each $j$. Since $f$ preserves the orbit
structure of the inflation, we can now constrain the partitions
$P_L$ and $P_R$ to partitions $P^j_L$ and $P^j_R$ of a fixed subset
$S_j$, and correpondingly form partitions $\tilde{P}^j_L$ and
$\tilde{P}^j_R$ of $\tilde{S}_j$. Then, $f$ must induce isomorphisms
between $P^j_L$ and $\tilde{P}^j_L$ for every $j$, and likewise between
$P^j_R$ and $\tilde{P}^j_R$ for every $j$. We summarise these findings
as follows.

\begin{theorem}\label{thm:equiv}
  Let $\XX$ and $\tilde{\XX}$ be two spaces of 1D aperiodic tilings
  with pure-point spectrum, generated by primitive inflations $\varrho$
  and $\tilde{\varrho}$ with the same inflation factor $\lambda$.
  Suppose that there exists an MLD map $f:\XX \rightarrow \tilde{\XX}$
  as above, which then must induce isomorphisms
  \begin{itemize}
    \item between the permutations $p$ of $S$ and $\tilde{p}$ of $\tilde{S}$,
    \item between $S$ and $\tilde{S}$, and between $S_j$ and
      $\tilde{S}_j$ for every $j$, 
    \item between $P_L$ and $\tilde{P}_L$, and between $P^j_L$ and
      $\tilde{P}^j_L$ for every $j$,
    \item between $P_R$ and $\tilde{P}_R$, and between $P^j_R$ and
      $\tilde{P}^j_R$ for every $j$.
  \end{itemize}
  If any of these isomorphisms fails, the two spaces cannot be
  MLD to each other. \hfil\qed
\end{theorem}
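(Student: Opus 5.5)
The plan is to show that $f$ restricts to a bijection $S \to \tilde{S}$ which conjugates $p$ to $\tilde{p}$. Every remaining item in the list then follows formally from this bijection together with Lemma~\ref{lemma:equiv}.

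First I will check that $f$ maps $S$ into $\tilde{S}$. Take $\cT \in S$. It is asymptotic to some $\cT'$, and by Lemma~\ref{lemma:equiv} the images $f(\cT)$ and $f(\cT')$ are again left (or right) asymptotic. They are distinct because $f$ is a homeomorphism, so $f(\cT)$ is asymptotic. Next, suppose $\varrho^k(\cT)=\cT$. Since the spectrum is pure point, Lemma~\ref{lemma:infl} gives $\tilde{\varrho} = f\circ\varrho\circ f^{-1}$, and hence
\[
  \tilde{\varrho}^k(f(\cT)) = f(\varrho^k(\cT)) = f(\cT).
\]
So $f(\cT)\in\tilde{S}$. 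Applying the same argument to the MLD map $f^{-1}$, which conjugates $\tilde{\varrho}$ back to $\varrho$, shows that $f|_S : S\to\tilde{S}$ is a bijection. The conjugacy also gives $f\circ p = \tilde{p}\circ f$ on $S$, which is the first bullet.

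From this intertwining, $f$ maps each $p$-orbit $S_j$ onto a $\tilde{p}$-orbit; I relabel these orbits as $\tilde{S}_j$. Lemma~\ref{lemma:equiv}, applied to both $f$ and $f^{-1}$, shows that two elements of $S$ are left (right) asymptotic if and only if their images are. Hence $f$ carries the blocks of $P_L$ ($P_R$) bijectively onto the blocks of $\tilde{P}_L$ ($\tilde{P}_R$). Intersecting these blocks with $S_j$ and $\tilde{S}_j$ gives the isomorphisms between $P^j_L$ and $\tilde{P}^j_L$, and between $P^j_R$ and $\tilde{P}^j_R$. The final sentence of the theorem is just the contrapositive.

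The only step needing care is the identity $\tilde{\varrho}=f\circ\varrho\circ f^{-1}$. For this I will use the paper's setup: $f$ sends the scaling centre of $\varrho$ to that of $\tilde{\varrho}$, possibly after composing with a translation. Then $f\circ\varrho\circ f^{-1}$ is an inflation on $\tilde{\XX}$ with factor $\lambda$ and the same centre as $\tilde{\varrho}$, and Lemma~\ref{lemma:infl} forces it to equal $\tilde{\varrho}$. This is where the pure-point hypothesis enters; everything else is bookkeeping.
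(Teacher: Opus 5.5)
Your proposal is correct and follows the paper's route. You align the scaling centres, use Lemma~\ref{lemma:infl} to get $\tilde{\varrho} = f\circ\varrho\circ f^{-1}$, and combine this conjugacy with Lemma~\ref{lemma:equiv} to get a bijection $S\to\tilde{S}$ that intertwines $p$ and $\tilde{p}$ and preserves the left/right partitions and their restrictions to orbits; your explicit remark about composing $f$ with a translation only makes precise what the paper leaves implicit in the phrase ``as above''.
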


\begin{remark}\label{rem:power}
  Theorem~\ref{thm:equiv} compares inflations with the same inflation
  factor. However, since any integer power $\varrho^n$ of an inflation
  $\varrho$ defines the same tiling space as $\varrho$, by passing to
  suitable powers we can sometimes make inflations with different
  inflations factors $\lambda,\lambda'$ comparable. Note, though,
  that two inflations can be MLD only if their inflation factors
  satisfy $\QQ(\lambda)=\QQ(\lambda')$ \cite[Corollary~2.3]{BJV}.
  \exend
\end{remark}

An interesting application of Theorem~\ref{thm:equiv} is
to the special case when, for a fixed $\alpha<0$, we have
$\tilde{\XX} = \{ \alpha\cT: \cT \in \XX \}$.  In other words,
$\tilde{\XX}$ is the mirror image of $\XX$, possibly with an
additional rescaling (which does not affect the structure of the
asymptotic composants). If $\XX$ is generated by an inflation
$\varrho$, $\tilde{\XX}$ is then generated by a inflation
$\tilde{\varrho}$, which sends tiles to the same patches as $\varrho$,
but in the reversed order.  Our question is again: can the two spaces
$\XX$ and $\tilde{\XX}$ possibly be MLD to each other? In the context
of constant-length inflation subshifts, this question was addressed
already in \cite[Theorem~1,Fact~3]{BRY}. Clearly, rescaling by
$\alpha<0$ is a homeomorphism, but it cannot be an MLD homeomorphism,
because it reverses the dynamics and the space orientation.
So, we must look for a different map.

By construction, $S$ and $\tilde{S}$ are isomorphic, and so are
$p$ and $\tilde{p}$. More interesting is the observation that
$P_L$ is isomorphic to $\tilde{P}_R$, and $P^j_L$ is isomorphic to
$\tilde{P}^j_R$ for every $j$, and likewise with $L$ and $R$ exchanged.
Hence, the following consequence is immediate.

\begin{coro}\label{coro:equiv}
  Let $\XX$ be a space of 1D aperiodic tilings with pure-point spectrum,
  generated by a primitive inflation $\varrho$, and suppose there exists
  an MLD map $f:\XX \rightarrow \tilde{\XX} = \{ \alpha\cT: \cT \in \XX \}$
  for some fixed $\alpha<0$. With the data introduced above, $f$ must
  then induce isomorphisms between $P_L$ and $P_R$, and between $P^j_L$
  and $P^j_R$, for every $j$. If any of these isomorphism fails, the two
  spaces cannot be MLD to each other. \hfil\qed
\end{coro}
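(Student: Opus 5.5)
The plan is to apply Theorem~\ref{thm:equiv} to the pair $\XX$ and $\tilde{\XX} = \{\alpha\cT : \cT\in\XX\}$, and then to translate the data of $\tilde{\XX}$ back into data of $\XX$ through the reflection map $r\colon \cT \mapsto \alpha\cT$.

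First, check that the hypotheses of Theorem~\ref{thm:equiv} hold. The space $\tilde{\XX}$ consists of 1D aperiodic tilings generated by the primitive inflation $\tilde{\varrho} = r\circ\varrho\circ r^{-1}$. This map has the same inflation factor $\lambda$, because scaling commutes with scaling; it sends each tile to the reversed patch. Pure-point spectrum carries over from $\XX$ to $\tilde{\XX}$, since $r$ conjugates the translation action by $t$ to the action by $\alpha t$. This only reparametrises time, so eigenfunctions go to eigenfunctions and the spectral type is preserved. Hence, if an MLD map $f\colon\XX\to\tilde{\XX}$ exists, Theorem~\ref{thm:equiv} yields isomorphisms $P_L\cong\tilde{P}_L$, $P_R\cong\tilde{P}_R$, and $P^j_L\cong\tilde{P}^j_L$, $P^j_R\cong\tilde{P}^j_R$ for every $j$, all compatible with the permutations $p$ and $\tilde{p}$.

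Next, identify the data of $\tilde{\XX}$ via $r$. Because $\alpha<0$, the tilings $\cT_1,\cT_2$ agree on a left half-line if and only if $\alpha\cT_1,\alpha\cT_2$ agree on a right half-line. Moreover, $r$ maps tilings fixed by $\varrho^n$ to tilings fixed by $\tilde{\varrho}^n$. So $r$ restricts to a bijection $S\to\tilde{S}$ that intertwines $p$ with $\tilde{p}$. It therefore carries the orbits $S_j$ to the orbits $\tilde{S}_j$ (after reindexing), and it sends $P_L$ to $\tilde{P}_R$ and $P_R$ to $\tilde{P}_L$, including the restricted partitions $P^j_L\to\tilde{P}^j_R$ and so on.

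Finally, compose the two identifications. The isomorphism $P_L\cong\tilde{P}_L$ coming from $f$, followed by $\tilde{P}_L\cong P_R$ coming from $r^{-1}$, gives $P_L\cong P_R$. The same argument gives the restricted isomorphisms $P^j_L\cong P^j_R$. The one point needing care is the indexing of the orbits. The orbit matching induced by $f$ and the one induced by $r$ may differ, so the composite $r^{-1}\circ f$ sends $S_j$ to some orbit $S_{\sigma(j)}$. I would handle this by reading the statement as asserting the isomorphisms for the orbit correspondence induced by $r^{-1}\circ f$, that is, $P^j_L\cong P^{\sigma(j)}_R$. For the obstruction statement, this means that if no bijection of orbits admits such isomorphisms, then the spaces are not MLD. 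This bookkeeping is the only non-routine step; the remaining steps are direct consequences of Lemma~\ref{lemma:equiv} and Theorem~\ref{thm:equiv}.
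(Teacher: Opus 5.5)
Your proposal is correct and follows the same route as the paper: apply Theorem~\ref{thm:equiv} to $\XX$ and $\tilde{\XX}$. Then use the reflection $\cT\mapsto\alpha\cT$, which intertwines $\varrho$ with $\tilde{\varrho}$ and exchanges left and right asymptoticity, to identify $\tilde{P}_L$ with $P_R$ and $\tilde{P}_R$ with $P_L$.

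Your re-indexing $P^j_L\cong P^{\sigma(j)}_R$ is not mere bookkeeping but the correct reading of the statement, which the paper glosses over. For the reflection-symmetric Tribonacci space (Example~\ref{ex:two}; Table~\ref{tab:tribo}, row 1), the reflection swaps the two inflation orbits $\{1,2,3\}$ and $\{4,5,6\}$, while $P^j_L\not\cong P^j_R$ for each fixed $j$.
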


\begin{remark}\label{rem:nonpp}
  In Theorem~\ref{thm:equiv} and Corollary~\ref{coro:equiv}, we have
  assumed that the dynamical system has pure-point spectrum, which ensures
  that the inflation is unique. Without this assumption, the claims
  made must be weakened. If there is not a unique inflation, we cannot
  conclude that the permutation actions on the asymptotic composants must
  be equivalent, nor that the partitions per inflation orbit must be
  isomorphic. However, for two systems $(\XX,\RR)$ and $(\tilde{\XX},\RR)$
  to be MLD, the partitions $P_L$ and $\tilde{P}_L$ still must
  be isomorphic, and similarly for $P_R$ and $\tilde{P}_R$. Likewise,
  for an MLD class of tilings to be mirror-symmetric, the partitions
  $P_L$ and $P_R$ must be isomorphic. These weaker constraints may still
  be useful for distinguishing two systems. The same constraints also hold,
  if we have only a general topological conjugacy of two dynamical system,
  not necessarily an MLD one. The set of asymptotic composants with their
  incidence relations is in fact a topological property.
  \rightline{\hfil\exend}
\end{remark}

\section{Determining  the asymptoptic composants}\label{sec:deriv}

Barge and Diamond \cite{BD} have given a procedure to determine the
set of asymptotic inflation fixed points and their equivalence relations,
but it is rather involved and complicated. Often, the inflation needs
to be rewritten in terms of new tiles, and sometimes even several
rewritings have to be considered in parallel, which makes it difficult
to automatize the process. Here, we propose a much simpler procedure,
which is easy to automatise and implement on a computer. The key to
avoid the problems encountered in \cite{BD} is to use large enough
seeds for the asymptotic inflation fixed points.

We start by determining the pairs of left asymptotic inflation fixed
points. The corresponding right asymptotic ones are then determined in
an analogous way. Our aim is to determine pairs of seeds that are large
enough to accommodate both the scaling centre of the inflation and the
left splitting point of the pair. For a pair of patches with the same
left-most vertex, the left splitting point is the right-most common vertex
with the property, that to the left of it the two patches agree completely.
To construct such pairs of patches, we start with the collection $C_k$
of all legal patches of $k$ consecutive tiles, for $k$ large enough.
We do not know a priori how large that is, but since there are only
finitely many asymptotic inflation fixed points, some finite $k$ will
be enough, and if $k$ is too small, we will notice that and can start
afresh with a larger $k$. From the collection $C_k$ we then form a
collection $W_k$ of pairs $(p,q)$ of $k$-patches from $C_k$, with the
property that $p$ and $q$ agree on the first, left-most tile, but
disagree on the second tile. The left splitting point of these pairs
of patches therefore is their second vertex from the left.

We now inflate each pair $(p,q) \in W_k$ and determine the left splitting
point of the inflated pair $(\varrho(p),\varrho(q))$. If to the right
of this splitting point, any of the patches $\varrho(p)$ and $\varrho(q)$
has fewer than $k$ tiles, then $k$ was too small, and we start
afresh with a larger $k$. Note that requiring $k$ tiles to the right
of the left splitting point (rather than $k-1$ tiles) ensures that
the patches are growing in both directions under inflation; compare
Figure~\ref{fig:seeds}. We now assume that all pairs in $W_k$ pass this
test. The inflation then induces a map $f:W_k \rightarrow W_k$. A pair
$(p,q)$ is mapped to the pair of $k$-subpatches of $\varrho(p)$ resp.\
$\varrho(q)$, which start one tile to the left, and end $k-1$ tiles
to the right of the left splitting point of the inflated pair
$(\varrho(p),\varrho(q))$. This map is not surjective in general, but
we can iterate it until we reach a stable image $W'_k \subset W_k$, on
which $f$ acts by permutation. The pairs in $W'_k$ are the seeds for the
left asymptotic pairs of inflation fixed points we are after.

What remains to be done is to determine the position of the splitting
point of the pair relative to the scaling centre of the inflation (which
we put at the origin). Suppose we place a pair $(p,q)$ with its left splitting
point at $s \le 0$ (with $s>0$, iterated inflation would generate a pair of
identical tilings). The inflation scaling maps this splitting point
to $\lambda s$, but the true left splitting of $(\varrho(p),\varrho(q))$
will be at $\lambda s + \delta(p,q)$, where $\delta(p,q) \ge 0$ is
determined from the pair $(\varrho(p),\varrho(q))$. In order that the
splitting point remains stable under inflation, we need
$s = \lambda s + \delta(p,q)$, or $s = \delta(p,q)/(1-\lambda) \le 0$.
With this, we now know all seeds of left asymtotic inflation fixed points
with absolute position in space. In principle, any smaller seed containing
the origin in its interior would be enough to determine the inflation fixed
points, but the larger seeds containing the splitting point as well guarantee
that we get only the \emph{asymptotic} inflation fixed points, and allow to
determine which of the tilings are actually left asymptotic to each other.

\begin{figure}[t]
\centerline{\includegraphics[width=6cm]{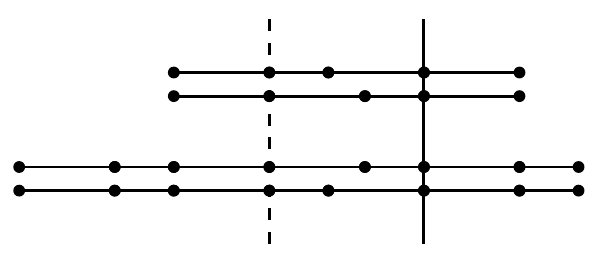}}
\caption{For the Fibonacci inflation $\varrho = [ab,a]$, seeds for the (only)
  left asymptotic pair of inflation fixed points are shown at the top, with
  their inflations underneath. If we place the scaling centre at the position
  of the solid vertical line, the position of the left splitting point
  (dashed vertical line) remains stable under inflation. Note that we
  need patches of at least four tiles; patches of three tiles would not
  grow beyond the scaling centre, and a unique continuation would not be
  guaranteed. Note also that the inflation swaps the two members of the pair.
  \label{fig:seeds}}
\end{figure}

Figure~\ref{fig:seeds} illustrates the situation up to this point.
We can now split the pairs of seeds again, to obtain the
collection of all left asymptotic inflation fixed points, $S_L$. From
$S_L$, we have to remove duplicates (a tiling can occur in several
asymptotic pairs), but we keep the information of which tilings are
left asymptotic to each other in the form of a partition $P'_L$ of $S_L$.
In a completely analogous way, the corresponding collection $S_R$ of all
right asymptotic inflation fixed points can be determined, along
with a partition $P'_R$ of $S_R$. The total set of all asymptotic
inflation fixed points $S$ is the union of $S_L$ and $S_R$,
where two seeds from $S_L$ and $S_R$ have to be identified if they
agree on an overlap containing the origin in its interior. The partition
$P'_L$ can then be completed to the partition $P_L$ of $S$,
by adding singletons for all tilings which occur only in $S_R$, and
analogously $P'_R$ can be completed to $P_R$. With the collected data,
all ingredients necessary for the application of Theorem~\ref{thm:equiv}
or Corollary~\ref{coro:equiv} are now ready to be used.

\section{Examples}\label{sec:examp}

In this section, we illustrate the application of Theorem~\ref{thm:equiv}
and Corollary~\ref{coro:equiv} with a detailed discussion of a number of
simple examples. A more thorough analysis of MLD relations between large
sets of inflation tilings is then given in the Appendix. We closely follow
the notation and the arguments of \cite{MLD}. Patches of tiles (or entire
tilings) are represented by words in a finite alphabet. Each letter stands
for a tile, and a word for a patch of tiles arranged in a given order.
A symbol $a$ in such a word can be interpreted in at least two ways.
On the one hand, it represents a geometric tile, here an interval of
length $\ell_a$, where the different lengths are chosen proportional
to the components of the left Perron--Frobenius (PF) eigenvector of
the inflation matrix, so that under inflation, each patch is mapped
to a new patch which is exactly $\lambda$ times longer, where $\lambda$
is the PF eigenvalue of the inflation matrix (compare \cite[Chapter~4]{TAO}).

On the other hand, each tile type can be regarded as a generator of a
free group, and each patch as a word in these generators. The advantage
of this view is that the inflation map can be regarded as a free group
endomorphism: each generator is replaced by a word in the generators.
We denote such an inflation map $\varrho$ by the list of the images of
the generators: $\varrho = [\varrho(a),\varrho(b),\ldots]$. Words
representing tilings are special, in that they contain no inverses
of generators. MLD maps $\theta: \XX \rightarrow \tilde{\XX}$
between two tiling spaces have to respect this. They can then be
interpreted as free group automorphisms, which transform words
from $\XX$ to words from $\tilde{\XX}$ \cite{MLD}. This underlying
group structure makes it much easier to \emph{find} MLD maps. This
is particularly so for unimodular, binary or ternary inflations
with irreducible characteristic polynomial of the inflation matrix.
For these, it has been shown \cite{TWZ04} that the required MLD
group automorphisms are products of particularly simple,
\emph{Fibonacci-type} transformations, combined with inner automorphisms
and permutations of tile types.

\begin{example}\label{ex:one}
As a first example, we consider the space $\XX$ generated by the primitive
inflation $\varrho = [\varrho(a),\varrho(b)] = [aab,ba]$, and compare it
to its mirror image $\tilde{\XX}$, generated by the reversed inflation
$\tilde{\varrho} = [baa,ab]$. It is easy to see that the two spaces
are different, and hence asymmetric. However, this does not yet exclude
the existence of a (non-symmetry-preserving) MLD relation. It is also
easy to see that there are four asymptotic inflation fixed point tilings
in total, with seeds $a.a$, $a.b$, $b.a$, and $b.b$. They form two
left asymptotic and two right asymptotic pairs, but each left asymptotic
pair is not right asymptotic, and vice versa. So far, this looks pretty
symmetric. The action of the inflation breaks this symmetry, however.
Indeed, $\varrho$ permutes two tilings in $\XX$ forming a right asymptotic
pair, which however is not left asymptotic. The same is true for the other
right asymptotic pair. Hence, for both $j$, the partitions $P^j_L$ and $P^j_R$
cannot be isomorphic, and an MLD relation is impossible.
\exend
\end{example}

\begin{example}\label{ex:two}
As a second example, we consider the Tribonacci space $\XX$, generated
by the inflation $\varrho = [c,ca,cb]$. It is well known that this space
is symmetric. In fact, the reversed inflation, $\tilde{\varrho} = [c,ac,bc]$
generates exactly the same space (the two free group endomorphisms are
conjugate by an inner automorphism). Of course, the asymptotic composants
are compatible with this. There are 6 asymptotic inflation fixed points,
forming one left asymptotic triple, and one right asymptotic triple,
which are mapped onto each other by reflection. The inflation permutes
the members of each triple cyclically.

However, there are other, non-symmetric tilings spaces in the same MLD
class. These come in mirror pairs. As detailed in \cite[Example 2]{MLD},
conjugating the inflation $\varrho$ with the automorphism $\theta_L = [a,ba,c]$
yields the inflation $\varrho^{}_L = \theta_L \circ \varrho \circ \theta^{-1}_L
=[c,a,cab]$. This automorphism induces an MLD transformation: It splits the
$b$ tile into an $a$ tile and a rest, which becomes the new $b$ tile.
The inflation matrix of $\varrho^{}_L$ differs from that of $\varrho$,
but the two are conjugate in $\textrm{GL}_3(\ZZ)$. The reversed automorphism
$\theta_R = [a,ba,c]$ can be used to conjugate $\tilde{\varrho}$,
which yields $\varrho^{}_R = \theta_R \circ \tilde{\varrho} \circ
\theta^{-1}_R =[c,a,bac]$. Thus, $\varrho^{}_L$ and $\varrho^{}_R$ form a mirror
pair of inflations, which produce a mirror pair of tilings spaces, both of
which are MLD with the symmetric Tribonacci tiling space, and thus are MLD
to each other.
\exend
\end{example}

\begin{example}\label{ex:three}
As a third example, we consider the mirror pair of inflations
$\varrho^{}_L = [bc,a,b]$ and $\varrho^{}_R= [cb,a,b]$, whose inflation
factor $\lambda$ is the smallest PV number, also known as the plastic 
number \cite[Example~4.4]{TAO}. The spaces $\XX_L$ and $\XX_R$ generated by
these inflations are not reflection symmetric. For instance, $\XX_L$ has three
left asymptotic pairs and two right asymptotic triples of asymptotic inflation
fixed points.  More interesting are the products
$\varrho = \varrho_L\circ\varrho_R = [ba,bc,a]$ and
$\tilde{\varrho} = \varrho_R\circ\varrho_L = [ab,cb,a]$, whose asymptotic
composants are symmetric, with symmetric inflation action. Indeed, since
$\varrho_L$ and $\varrho_R$ are invertible, the two inflations are conjugate
to each other: $\tilde{\varrho} = \varrho_L^{-1}\circ\varrho\circ\varrho_L$, and
$\varrho = \varrho_R^{-1}\circ\tilde{\varrho}\circ\varrho_R$. There is one
subtlety, however: this conjugation, which is an inflation with scaling $\lambda$,
changes the relative scale. $\XX$ and its reflection $\tilde{\XX}$ are MLD,
but only if one of them is first rescaled by an \emph{odd} power of $\lambda$.
This is an instance where in Corollary~\ref{coro:equiv} a factor $\alpha\ne-1$
has to be chosen. This example is completely analogous to \cite[Example~3]{MLD},
where inflations with factor $\lambda^3$ are found to be conjugate, provided
they are compared at the right relative scale.
\exend
\end{example}

\begin{figure}
\centerline{\includegraphics[width=12cm]{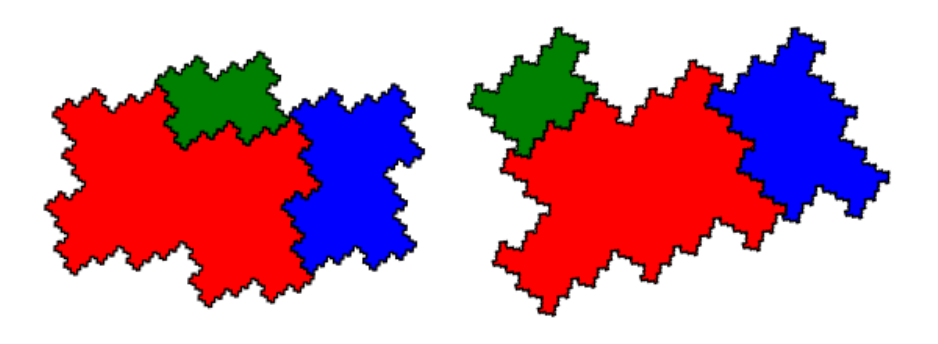}}
\caption{Windows of the tiling spaces with inflation $\varrho_M = [aca,a,b]$
  (left) and $\varrho_R = [aac,a,b]$ (right).\label{fig:kol}}
\end{figure}

\begin{example}\label{ex:four}
As a fourth example, we consider the inflations $\varrho^{}_M = [aca,a,b]$,
$\varrho^{}_L = [caa,a,b]$, and $\varrho1{}_R = [aac,a,b]$, all with the same
inflation matrix. $\varrho^{}_M$ is in the (manifestly reflection symmetric)
MLD class of the Kolakoski-(3,1) sequence \cite{Kol}. In Figure~\ref{fig:kol}
we show the windows of the tiling spaces generated by $\varrho^{}_M$ and $\varrho^{}_R$.
Their window boundaries have a very similar fractal structure, and in fact have
the same Hausdorff dimension, as can be computed via the orbit separation dimension
\cite{BGG}, which makes it tempting to suspect that they are in the same MLD
class. Yet, the inflations $\varrho^{}_L$ and $\varrho^{}_R$ generate spaces with
asymmetric asymptotic composants, which are inequivalent to those of the
Kolakoski-(3,1) space. The space of $\varrho^{}_M$ has two left asymtotic and
two right asymptotic pairs, whereas that of $\varrho^{}_R$ has two left asymptotic
pairs and one right asymptotic triple. Hence, the three inflations $\varrho^{}_M$,
$\varrho^{}_L$, and $\varrho¬{}_R$ produce tilings which belong to three distinct
MLD classes of tiling spaces, and in fact to three distict topological conjugacy
classes (comapare also Remark~\ref{rem:nonpp}).
\exend
\end{example}

\section{Appendix: MLD classification by asymptotic composants and OSD}

In this appendix, we give a more extensive analysis of the MLD relations
between large classes of 1D inflation tilings. It turns out that,
in conjunction with the orbit separation dimension (OSD) \cite{BGG},
the structure of the asymptotic composants is a powerful tool for
distinguishing MLD classes of such tilings. We illustrate this by
classifying all primitive inflation tilings having as inflation factor
one of the eight smallest ternary Pisot units. These inflations all have
pure-point spectrum, as has been derived earlier \cite{AGL14}.
But before that, as a warm-up, we do the same for all inflations having
as inflation matrix one of the first three powers of the Fibonacci
inflation matrix.

As a first step, for a given collection of inflations with the same
inflation factor, we try to find explicit MLD transformations between
them, using the techniques explained in \cite{MLD}; see also
Section~\ref{sec:examp}. For the binary and ternary examples
considered in this Appendix, the results of \cite{TWZ04} are
particularly helpful for finding MLD relations. Another technique
is to check whether two inflations have equivalent return word
encodings \cite{Durand}, which also implies an MLD relation.
Once we have partitioned the inflations into collections which are
known to be MLD, we then try to make sure that inflations from different
collections are not MLD, by computing various MLD invariants which can
differentiate between the different classes. For all examples presented
below, this process was successful, and it turned out that OSD and
asymptotic composants are sufficient to distinguish all MLD classes
in this collection.

In the tables below, we give, for each MLD class, a representative
inflation rule in the form of the list of images of the prototiles
under the inflation. For the Fibonacci inflation, this would be
$[ab,a]$. The second entry is the OSD \cite{BGG}, an approximate real
number $\ge 1$. The third entry is the (\v{C}ech) cohomology rank
\cite{AP}, which is not really needed, but still is useful information.
Then comes the structure of the asymptotic composants. For these, the
asymptotic inflation fixed points are numbered, and under the header
\emph{AC left} we group the numbers of those inflation fixed points
which are left asymptotic to each other, under \emph{AC right} the
numbers of those which are right asymptotic to each other, and under
the header \emph{Perm} we give the permutation by which the inflation
acts on the set of asymptotic inflation fixed points.

Below we only list the results, without giving detailed computations
(most of which have been obtained by computer), but we complement
those results with some further remarks.

\subsection{Variations of Fibonnaci and its powers}

In Table~\ref{tab:fibo}, we collect MLD class representatives of all
primitive binary inflations with an inflation matrix which is one of
the first three powers of the Fibonacci inflation matrix. These are
obtained by varying the order of the tiles within the supertiles.
We obtain two symmetric and seven mirror pairs of MLD classes. The
symmetric ones (numbers 1 and 5) are distinguished by their OSD and
their cohomology rank. Except for these two classes, all the others
have asymmetric asymptotic composants, or an asymmetric inflation
action on them. Numbers 2 and 5 happen to have the same OSD,
but are distinguished by their cohomology rank and their asymptotic
composants. Number 7 has been discussed in Example~\ref{ex:one}.

\begin{table}[t!] 
  \caption{Variations of powers of the Fibonacci inflation, obtained
    by reshuffling the tiles within the inflation words. Each MLD class
    of such inflations is listed with its properties and a representative
    inflation.}
\begin{center}
{\small
\begin{tabular}{|clcrlll|}
  \hline
  \textbf{Nr} & \textbf{Repr} & \textbf{Rk} & \textbf{OSD} & \textbf{AC left} & \textbf{AC right} & \textbf{Perm} \\
  \hline
  1 & [ab,a]      & 2 &  1.0000  & [1,2]       & [1,2]       & (1,2)      \\
  2 & [aaabb,aab] & 2 &  4.1842  & [1,3]       & [2,4]       & (2,4)      \\
  3 & [abbaa,aab] & 2 & 10.4419  & [3,4]       & [1,2]       & (3,4)      \\
  4 & [abaab,baa] & 3 &  2.5676  & [1,2],[3,4] & [1,3],[2,4] & (1,3)(2,4) \\
  5 & [abbba,aba] & 3 &  4.1842  & [1,2],[3,4] & [1,3],[2,4] & (1,4)(2,3) \\
  6 & [aabab,baa] & 3 &  4.8878  & [1,2],[3,4] & [1,3],[2,4] & (1,3)(2,4) \\
  7 & [aab,ba]    & 3 & 11.8744  & [1,2],[3,4] & [1,3],[2,4] & (1,3)(2,4) \\
  8 & [aaabb,baa] & 3 & 22.6225  & [1,2],[3,4] & [1,3],[2,4] & (1,3)(2,4) \\
  \hline
\end{tabular}
}
\end{center}
\label{tab:fibo}
\end{table}

\subsection{Inflations by powers of the plastic number}

In Table~\ref{tab:plast}, we collect MLD class representatives of all
inflations with inflation factor the plastic number $\lambda_1$, or
its second or third power (compare \cite[Example~4.4]{TAO}).
The plastic number, which is the smallest of all Pisot numbers,
is the largest root of the polynomial $x^3-x-1$.
We find one symmetric and seven mirror pairs of MLD classes. The
representative of class 7, which together with its reflection is the
only one with inflation factor $\lambda_1$, generates a mirror pair of
MLD classes with rather large OSD. If we take the composition of this
inflation with its reflection, we obtain a representative of the only
symmetric MLD class (number 1). This inflation was discussed in detail
in Example~\ref{ex:two}. All other MLD classes have asymmetric
asymptotic composants, or an asymmetric inflation action on them.
Apart from mirror images, all MLD classes have distinct OSD.
We note that the dragons of \cite[Figs.~5-8]{MLD} belong to class 2.

\begin{table}[t!]
  \caption{MLD class representatives of inflations by powers of the plastic number.}
\begin{center}
{\small
\begin{tabular}{|clcrlll|}
  \hline
  \textbf{Nr} & \textbf{Repr} & \textbf{Rk} & \textbf{OSD} & \textbf{AC left} & \textbf{AC right} & \textbf{Perm} \\
  \hline
  1 & [ab,cb,a]    & 3 &  2.2229 & [2,8],[3,4]             & [1,7],[5,6]             & (1,5,7,6)(2,3,8,4)    \\
  2 & [aac,a,bc]   & 3 &  4.4260 & [1,4]                   & [2,5],[3,6]             & (2,6,5,3)             \\
  3 & [acb,ab,bc]  & 3 &  5.9468 & [3,4],[5,6]             & [1,2]                   & (3,6,4,5)             \\ 
  4 & [bac,ab,cb]  & 3 & 21.4521 & [1,2,3]                 & [4,7],[5,6]             & (1,2)(4,5,7,6)        \\ 
  5 & [acb,ba,bca] & 4 & 14.6784 & [2,3],[4,5]             & [1,3,5],[2,4]           & (2,4)(3,5)            \\
  6 & [caa,a,bc]   & 5 & 17.5023 & [1,2,3],[4,5,6]         & [1,4],[2,5],[3,6]       & (1,3,2)(4,6,5)        \\ 
  7 & [bc,a,b]     & 5 & 37.3375 & [1,2],[3,4],[5,6]       & [1,3,5],[2,4,6]         & (1,3,5)(2,4,6)        \\
  8 & [cba,ab,bc]  & 7 & 19.8019 & [1,2,3],[4,5,6],[7,8,9] & [1,4,7],[2,5,8],[3,6,9] & (1,3,2)(4,6,5)(7,9,8) \\
  \hline
\end{tabular}
}
\end{center}
\label{tab:plast}
\end{table}

\subsection{Inflations with Tribonacci inflation factor}

In Table~\ref{tab:tribo}, we collect MLD class representatives of all ternary, primitive
inflations with the Tribonacci inflation factor, $\lambda_2 \approx 1.83929$, the largest
root of the polynomial $x^3-x^2-x-1$. Up to simultanous permutations of rows and
columns, there are $4$ inflation matrices with this characteristic polynomial.
The corresponding inflations fall into one symmetric and four mirror pairs of
MLD classes. The Tribonacci inflation belongs to the symmetric MLD class
(number 1 in Table~\ref{tab:tribo}), as do two further mirror pairs of inflations
(compare Example~\ref{ex:three}. All other MLD classes have asymmetric asymptotic
composants, or an asymmetric inflation action on them, and are thus asymmetric.
Except for mirror pairs, all MLD classes have distinct OSD.

\begin{table}[t!]
  \caption{MLD class representatives of inflations by the Tribonacci inflation factor.}
\begin{center}
{\small
\begin{tabular}{|clcrlll|}
  \hline
  \textbf{Nr} & \textbf{Repr} & \textbf{Rk} & \textbf{OSD} & \textbf{AC left} & \textbf{AC right} & \textbf{Perm} \\
  \hline
  1 & [ab,ac,a]  & 3 &  2.2060 & [4,5,6]                 & [1,2,3]                  & (1,2,3)(4,6,5)       \\
  2 & [ab,ca,a]  & 3 &  9.6109 & [3,6],[4,5]             & [1,2]                    & (1,2)(3,5,6,4)       \\
  3 & [bac,a,b]  & 5 &  5.5585 & [1,2],[3,4],[5,6]       & [1,3,5],[2,4,6]          & (1,4,5,2,3,6)        \\    
  4 & [bbc,cb,a] & 7 & 21.2133 & [1,2,3],[4,5,6],[7,8,9] & [1,4,7],[2,5,8],[3,6,9]  & (1,5,3,4,2,6)(7,8,9) \\
  5 & [acc,ca,b] & 7 & 27.9209 & [1,2,3],[4,5,6],[7,8,9] &  [1,4,7],[2,5,8],[3,6,9] & (1,4,7)(2,6,8,3,5,9) \\   
  \hline
\end{tabular}
}
\end{center}
\label{tab:tribo}
\end{table}

\subsection{Inflations by powers of the second-smallest ternary Pisot unit}

In Table~\ref{tab:second}, we collect MLD class representatives of all inflations
with inflation factor the second-smallest ternary Pisot unit $\lambda_2$, or its
square, where $\lambda_2$ is the largest root of the polynomial $x^3-x^2-1$.
Except for the mirror pair with number 1 in Table~\ref{tab:second}, all
inflations have inflation factor $\lambda_2^2$. We have one inflation matrix with
leading eigenvalue $\lambda_2$, and seven with leading eigenvalue $\lambda_2^2$.
We find one symmetric and 17 mirror pairs of MLD classes. If we take the
composition of the representative of class 1 with its reflection,
we obtain a representative of the only symmetric MLD class (number 7).
This is an analogous phenomenon as discussed in Example~\ref{ex:three}.
The mirror pair of class 1 and the symmetric MLD class 7 have the same OSD.
All other mirror pairs have distinct OSD. All mirror pairs have asymmetric
asymptotic composants, or an asymmetric inflation action in them,
so that they are really asymmetric. We note that the examples of
\cite[Figs.~1,2]{MLD} belong to the (square of) class 1 in the table.

\begin{table}[t!]
  \caption{MLD class representatives of inflations by powers of the second-smallest ternary Pisot unit.}
\begin{center}
{\small
\begin{tabular}{|clcrlll|}
  \hline
  \textbf{Nr} & \textbf{Repr} & \textbf{Rk} & \textbf{OSD} & \textbf{AC left} & \textbf{AC right} & \textbf{Perm} \\
  \hline
  1 & [ac,a,b]     & 3 &   3.7829 & [4,7],[5,6]             & [1,2,3]                 & (1,2,3)(4,5,7,6)     \\
  2 & [bcc,ac,cb]  & 3 &   8.7737 & [1,2],[3,4]             & [2,4]                   & (1,3)(2,4)           \\
  3 & [ccbbc,a,cb] & 3 &  12.8251 & [4,5],[6,7]             & [1,2,3]                 & (1,3,2)(4,6,5,7)     \\
  4 & [bcc,ca,cb]  & 3 &  15.5091 & [4,5],[6,7]             & [1,2,3]                 & (1,2,3)(4,6)(5,7)    \\
  5 & [abc,ca,a]   & 3 &  27.6619 & [3,4],[5,6]             & [1,2]                   & (1,2)(3,6,4,5)       \\
  6 & [cccbb,a,cb] & 3 &  65.4734 & [3,4],[5,6]             & [1,2]                   & (1,2)(3,5,4,6)       \\
  7 & [bac,ac,a]   & 4 &   3.7829 & [1,2],[3,4]             & [1,3],[2,4]             & (1,4)(2,3)           \\
  8 & [cab,ac,a]   & 5 &  12.2329 & [1,2],[3,4],[5,6]       & [1,3,5],[2,4,6]         & (1,4,5,2,3,6)        \\
  9 & [accb,ca,b]  & 5 &  21.6026 & [1,2,3],[4,5,6]         & [1,4],[2,5],[3,6]       & (1,4)(2,6)(3,5)      \\
 10 & [bbac,a,b]   & 5 &  26.5686 & [1,2],[3,4],[5,6]       & [1,3,5],[2,4,6]         & (1,4,5,2,3,6)        \\
 11 & [bacc,ca,b]  & 5 &  36.0646 & [1,2],[3,4],[5,6]       & [1,3,5],[2,4,6]         & (1,4,5,2,3,6)        \\
 12 & [cabc,ac,b]  & 6 &   7.6330 & [1,2,3],[4,5,6]         & [1,4],[2,5],[3,6],[7,8] & (1,6,2,4,3,5)(7,8)   \\
 13 & [bccbc,a,cb] & 7 &  24.8793 & [1,2,3],[4,5,6],[7,8,9] & [1,4,7],[2,5,8],[3,6,9] & (1,8,4,2,7,5)(3,9,6) \\
 14 & [acbc,ca,b]  & 7 &  27.4558 & [1,2,3],[4,5,6],[7,8,9] & [1,4,7],[2,5,8],[3,6,9] & (1,4,7)(2,6,8,3,5,9) \\
 15 & [bcbcc,a,cb] & 7 &  36.3827 & [1,2,3],[4,5,6],[7,8,9] & [1,4,7],[2,5,8],[3,6,9] & (1,8,4,2,7,5)(3,9,6) \\
 16 & [accc,cca,b] & 7 &  71.8547 & [1,2,3],[4,5,6],[7,8,9] & [1,4,7],[2,5,8],[3,6,9] & (1,4,7)(2,6,8,3,5,9) \\
 17 & [abcc,ca,b]  & 7 &  97.2913 & [1,2,3],[4,5,6],[7,8,9] & [1,4,7],[2,5,8],[3,6,9] & (1,4,7)(2,6,8,3,5,9) \\
 18 & [bbccc,a,cb] & 7 & 174.3150 & [1,2,3],[4,5,6],[7,8,9] & [1,4,7],[2,5,8],[3,6,9] & (1,8,4,2,7,5)(3,9,6) \\
  \hline
\end{tabular}
}
\end{center}
\label{tab:second}
\end{table}

\begin{table}[t!]
  \caption{MLD class representatives of inflations by the of Kolakoski-$(3,1)$ inflation factor.}
\begin{center}
{\small
\begin{tabular}{|clcrlll|}
  \hline
  \textbf{Nr} & \textbf{Repr} & \textbf{Rk} & \textbf{OSD} & \textbf{AC left} & \textbf{AC right} & \textbf{Perm} \\
  \hline
  1 & [aca,a,b]   & 3 &  2.5534 & [2,8],[3,4]             & [1,7],[5,6]              & (1,5,7,6)(2,3,8,4)   \\
  2 & [aac,a,b]   & 3 &  2.5534 & [4,7],[5,6]             & [1,2,3]                  & (1,2,3)(4,5,7,6)     \\
  3 & [cabc,a,cb] & 3 &  3.3299 & [4,5],[6,7]             & [1,2,3]                  & (1,3,2)(4,6,5,7)     \\  
  4 & [abc,cbc,a] & 3 &  7.2657 & [3,6],[4,5]             & [1,2]                    & (1,2)(3,4,6,5)       \\
  5 & [abc,ccb,a] & 3 &  8.7747 & [4,7],[5,6]             & [1,2,3]                  & (1,2)(4,6,7,5)       \\   
  6 & [bcc,ab,cb] & 3 &  9.6121 & [1,2,3]                 & [4,7],[5,6]              & (1,2)(4,5,7,6)       \\   
  7 & [bac,bcb,a] & 3 &  9.4088 & [3,4],[5,6]             & [1,2]                    & (1,2)(3,6,4,5)       \\   
  8 & [acb,ba,b]  & 4 & 27.2585 & [1,2],[3,4]             & [1,3],[2,4]              & (1,3)(2,4)           \\ 
  9 & [bac,cba,a] & 5 & 13.8073 & [1,2,3],[4,5,6]         & [1,4],[2,5],[3,6],[7,8]  & (1,5,3,4,2,6)        \\   
 10 & [abc,ba,b]  & 5 & 17.9987 & [1,2],[3,4],[5,6]       & [1,3,5],[2,4,6]          & (1,3,5)(2,4,6)       \\
 11 & [acb,bcc,a] & 5 & 18.8489 & [1,2],[3,4],[5,6]       & [1,3,5],[2,4,6]          & (1,3,5)(2,4,6)       \\   
 12 & [bcc,ba,cb] & 5 & 19.9170 & [1,2],[3,4],[5,6]       & [1,3,5],[2,4,6]          & (1,5,3)(2,6,4)       \\   
 13 & [bcac,a,cb] & 7 & 11.4483 & [1,2,3],[4,5,6],[7,8,9] & [1,4,7],[2,5,8],[3,6,9]  & (1,8,4,2,7,5)(3,9,6) \\
 14 & [bac,ccb,a] & 7 & 44.8306 & [1,2,3],[4,5,6],[7,8,9] & [1,4,7],[2,5,8],[3,6,9]  & (1,5,3,4,2,6)(7,8,9) \\  
 15 & [bacc,a,cb] & 7 & 60.9140 & [1,2,3],[4,5,6],[7,8,9] & [1,4,7],[2,5,8],[3,6,9]  & (1,8,4,2,7,5)(3,9,6) \\
  \hline
\end{tabular}
}
\end{center}
\label{tab:kola}
\end{table}

\subsection{Inflations by the factor of Kolakoski-$(3,1)$}

In Table~\ref{tab:kola}, we collect MLD class representatives of all ternary, primitive
inflations with the Kolakoski-$(3,1)$ inflation factor $\lambda_4 \approx 2.20557$, the
largest root of the polynomial $x^3-2x^2-1$ \cite{Kol}. Up to simultanous permutations of
rows and columns, there are $6$ inflation matrices with this characteristic polynomial.
The corresponding inflations fall into one symmetric and 14 mirror pairs of
MLD classes. The Kolakoski-$(3,1)$ inflation belongs to the symmetric MLD class
(number 1 in Table~\ref{tab:kola}), as do three further mirror pairs of inflations.
All other MLD classes have asymmetric asymptotic composants, or an asymmetric
inflation action on them, and are thus asymmetric. Except for the symmetric
MLD class and one mirror pair, all other mirror pairs have distinct OSD.

\subsection{Three real eigenvalues}

In Table~\ref{tab:threereal}, we collect MLD class representatives of all ternary, primitive
inflations, whose inflation factor is the smallest ternary Pisot unit with two real Galois
conjugates. This inflation factor $\lambda_5 \approx 2.24698$ is the largest
root of the polynomial $x^3-x^2-x-1$. Up to simultanous permutations of rows
and columns, there are $5$ inflation matrices with this characteristic polynomial.
The corresponding inflations fall into 12 mirror pairs of MLD classes.
All mirror pairs have have distinct OSD, and all have asymmetric asymptotic
composants, or an asymmetric inflation action on them, so that all spaces
are really asymmetric.

\begin{table}[t]
  \caption{MLD class representatives of inflations by the smallest ternary Pisot unit with only real Galois conjugates.}
\begin{center}
{\small
\begin{tabular}{|clcrlll|}
  \hline
  \textbf{Nr} & \textbf{Repr} & \textbf{Rk} & \textbf{OSD} & \textbf{AC left} & \textbf{AC right} & \textbf{Perm} \\
  \hline
   1 & [aac,ac,b]  & 3 &  3.6679 & [1,4],[2,5]             & [1,2,3]                 & (1,2)(4,5)           \\       
   2 & [bac,ca,bc] & 3 &  9.8655 & [1,2,4]                 & [3,7],[5,6]             & (1,2)(3,5,7,6)       \\
   3 & [bac,ca,cb] & 3 & 11.5683 & [4,5],[6,7]             & [1,2,3]                 & (1,2,3)(4,6)(5,7)    \\
   4 & [bbac,a,bc] & 3 & 14.2701 & [1,2]                   & [3,5],[4,6]             & (1,2)(3,4,5,6)       \\  
   5 & [abc,ba,a]  & 4 &  5.3822 & [1,2],[3,4]             & [1,3],[2,4]             & (1,3)(2,4)           \\     
   6 & [abc,ac,cb] & 4 & 12.9699 & [1,2],[3,4]             & [1,3],[2,4],[5,6]       & (1,3)(2,4)(5,6)      \\  
   7 & [acb,ca,bc] & 5 &  5.1217 & [1,2,3],[4,5],[6,7]     & [2,4,6],[3,5,7]         & (2,3)(4,7)(5,6)      \\ 
   8 & [bbc,ba,cb] & 5 & 10.2679 & [1,2],[3,4],[5,6],[7,8] & [1,3,5],[2,4,6]         & (1,5,3)(2,6,4)(7,8)  \\
   9 & [abc,ca,cb] & 5 & 12.3481 & [1,2],[3,4],[5,6]       & [1,3,5],[2,4,6]         & (1,3,5)(2,4,6)       \\
  10 & [aac,ca,b]  & 7 &  8.7780 & [1,2,3],[4,5,6],[7,8,9] & [1,4,7],[2,5,8],[3,6,9] & (1,8,4,2,7,5)(3,9,6) \\
  11 & [bbac,a,cb] & 7 & 10.2377 & [1,2,3],[4,5,6],[7,8,9] & [1,4,7],[2,5,8],[3,6,9] & (1,4,7)(2,6,8,3,5,9) \\
  12 & [babc,a,cb] & 7 & 50.1704 & [1,2,3],[4,5,6],[7,8,9] & [1,4,7],[2,5,8],[3,6,9] & (1,8,4,2,7,5)(3,9,6) \\
  \hline
\end{tabular}
}
\end{center}
\label{tab:threereal}
\end{table}

\section*{Acknowledgements}

The author would like to thank M.~Baake, J.~Maz\'{a}\v{c}, and N.~Ma\~{n}ibo
for fruitful discussions.
This work was supported by the German Research Council (Deutsche
Forschungsgemeinschaft, DFG) under SFB-TRR 358/1 (2023) -- 491392403.

\end{document}